\newtheorem{thm}{Theorem}[section]
\newtheorem{cor}[thm]{Corollary}
\newtheorem{lem}[thm]{Lemma}
\newtheorem{prop}[thm]{Proposition}
\theoremstyle{definition}
\newtheorem{defn}[thm]{Definition}
\theoremstyle{remark}
\newtheorem{rem}[thm]{\bf Remark}
\newcommand{\amod}{A\mbox{-{\rm mod}}}
\newcommand{\akmod}{A\otimes_kK\mbox{-{\rm mod}}}
\newcommand{\dba}{\mathbf{D}^{\mathrm{b}}(A\mbox{-{\rm mod}})}
\newcommand{\dbak}{\mathbf{D}^{\mathrm{b}}(A\otimes_kK\mbox{-{\rm mod}})}
\newcommand{\kba}{\mathbf{K}^{\mathrm{b}}(A\mbox{-{\rm proj}})}
\newcommand{\kbak}{\mathbf{K}^{\mathrm{b}}(A\otimes_kK\mbox{-{\rm proj}})}
\newcommand{\opg}{\underset{g\in G}{\oplus}}
\begin{document}

\title{piecewise hereditary algebras under field extensions}

\author{Jie Li}

\subjclass[2010]{16G10, 16E35}
\date{\today}
\keywords{piecewise hereditary algebra, Galois extension, directing object.}
\thanks{E-mail: lijie0$\symbol{64}$mail.ustc.edu.cn}

\begin{abstract}
	Let $A$ be a finite-dimensional $k$-algebra and $K/k$ be a finite separable field extension. We prove that $A$ is derived equivalent to a hereditary algebra if and only if so is $A\otimes_kK$.
\end{abstract}

\maketitle

\section{introduction}

Let $A$ be a finite-dimensional algebra over a field $k$ and $\amod$ be the category of finitely generated left $A$-modules. Recall that $A$ is called piecewise hereditary if there is a hereditary abelian category $H$ such that the bounded derived category $\dba$ is equivalent to $\mathbf{D}^b(H)$ as triangulated categories.

Piecewise hereditary algebras are important and well-studied in representation theory. A homological characteristic via strong global dimensions of piecewise hereditary algebras was given by Happel and Zacharia in \cite{HR}. Using this characteristic, Li proved in \cite{LL} that the piecewise hereditary property is compatible under certain skew group algebra extensions. Similarly, we prove that it is also compatible under finite separable field extensions (see Corollary \ref{phqtufe}), which is a special case of \cite[Proposition 5.1]{LJ}. 

According to \cite{HR}, a connected piecewise hereditary $k$-algebra is derived equivalent to either a hereditary $k$-algebra or a canonical $k$-algebra. Notice that the homological characteristic and hence the compatibilities mentioned above do not distinguish these two situations. In this paper, we look for a refinement. We prove that these two kinds of piecewise hereditary algebras are closed under certain base field change. More precisely, we obtain the following result.

\medskip
\noindent {\bf Main Theorem.} \textit{Let $K/k$ be a finite separable filed extension and $A$ a $k$-algebra. Then $A$ is derived equivalent to a hereditary algebra if and only if so is $A\otimes_kK$.}

\medskip
As a corollary, $A$ is derived equivalent to a canonical algebra if and only if so is $A\otimes_kK$. We also prove that $A$ is a tilted algebra if and only if so is $A\otimes_kK$.
\medskip

By \cite{JMD}, if an algebra is derived equivalent to a hereditary algebra (or a canonical algebra), then so is its skew group algebra extension under certain condition. However, the converse of this statement has not been proved. Our theorem is the field extension version of this statement with a confirmation of the converse.

Our proof of the main theorem is based on the description of hereditary triangulated categories by directing objects; see \cite[Corollary 5.5]{CR}. We are inspired by the proof of Theorem 1.1 in \cite{LZ} saying that tilted algebras are compatible under certain skew group algebra extensions.

\section{derived categories and Galois extensions}

\subsection{Derived categories and field extensions}
We fix a finite separable field extension $K/k$, and consider a finite-dimensional $k$-algebra $A$ and its scale extension $A\otimes_kK$. The algebra extension $A\rightarrow A\otimes_kK$ induces an adjoint pair $(-\otimes_kK,F)$ between finitely generated left module categories $\amod$ and $\akmod$, where 
$$-\otimes_kK\colon \amod\longrightarrow \akmod\mbox{, }
M\longmapsto M\otimes_kK\mbox{, }\forall M\in\amod$$ is the scale extension functor and $$F\colon\akmod \longrightarrow\amod$$ is the forgetful functor. 

Denoted by $\kba$ the bounded homotopy category of finitely generated projective left $A$-modules and $\dba$ the bounded derived category. Since $-\otimes_kK$ and $F$ map projective modules to projective modules, they can be extended in a natural manner to an adjoint pair between $\kba$ and $\kbak$. These two functors are also exact, so they can be extended to an adjoint pair between $\dba$ and $\dbak$. We still denote these two adjoint pairs by $(-\otimes_kK, F)$ for convenience.

Recall from \cite{NBO} that a functor $G\colon \mathcal{C}\rightarrow \mathcal{D}$ is called \emph{separable} if for any $X, Y$ in $\mathcal{C}$, there is a map $$H_{X,Y}\colon {\rm Hom}_\mathcal{D}(G(X),G(Y))\rightarrow{\rm Hom}_\mathcal{C}(X,Y)$$ such that $H_{X,Y}(G(f))=f$, for any $f\in{\rm Hom}_\mathcal{C}(X,Y)$, and $H_{X,Y}$ is natural in $X$ and $Y$. 

The functors $(-\otimes_kK, F)$ defined above for module categories, homotopy categories and derived categories are separable since the field extension $K/k$ is separable; see \cite[Example 3.6]{LJ}. Hence each object $X$ in $\amod$, $\kba$ or $\dba$ is a direct summand of $F(X\otimes_kK)$ and each $Y$ in $\akmod$, $\kbak$ or $\dbak$ is a direct summand of $F(Y)\otimes_kK$; see \cite{LJ,Ra}.
 
The following lemma due to \cite{Z} and \cite{K} will be frequently used.
\begin{lem}
	Given two objects $X$ and $Y$ in $\dba$, we have an isomorphism of vector spaces $$\mathrm{Hom}_{\dbak}(X\otimes_kK,Y\otimes_kK)\simeq\mathrm{Hom}_{\dba}(X,Y)\otimes_kK.$$ In particular, we have an isomorphism of $K$-algebras $$\mathrm{End}_{\dbak}(X\otimes_kK)\simeq\mathrm{End}_{\dba}(X)\otimes_kK,$$
	and an isomorphism of vector spaces $$\mathrm{rad}(A\otimes_kK)\simeq(\mathrm{rad}A)\otimes_kK.$$
\end{lem}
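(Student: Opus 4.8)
The plan is to prove the $\mathrm{Hom}$-formula first and then deduce the two ``in particular'' statements from it. I would begin at the level of modules. For finitely generated $A$-modules $M$ and $N$, the adjunction $(-\otimes_kK,F)$ supplies a natural isomorphism $\mathrm{Hom}_{A\otimes_kK}(M\otimes_kK,N\otimes_kK)\simeq\mathrm{Hom}_A(M,F(N\otimes_kK))$. The forgetful functor $F$ simply regards $N\otimes_kK$ as an $A$-module, and upon choosing a $k$-basis of $K$ this $A$-module is isomorphic to $N^{\oplus n}$ with $n=[K:k]$. Additivity of $\mathrm{Hom}_A(M,-)$ then gives $\mathrm{Hom}_A(M,N\otimes_kK)\simeq\mathrm{Hom}_A(M,N)\otimes_kK$, and the resulting composite is the base-change map $f\otimes\lambda\mapsto\lambda\,(f\otimes_kK)$, which is visibly $K$-linear.

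Next I would lift this to $\dba$. Since the excerpt has already extended the adjoint pair $(-\otimes_kK,F)$ to the derived categories, the same computation applies verbatim: $\mathrm{Hom}_{\dbak}(X\otimes_kK,Y\otimes_kK)\simeq\mathrm{Hom}_{\dba}(X,F(Y\otimes_kK))$, and $F(Y\otimes_kK)\simeq Y^{\oplus n}$ in $\dba$ because tensoring a bounded complex with the $n$-dimensional $k$-space $K$ yields a finite direct sum. Additivity of $\mathrm{Hom}_{\dba}(X,-)$ then produces $\mathrm{Hom}_{\dba}(X,Y)\otimes_kK$. This route avoids projective resolutions entirely. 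The one point needing care---and the step I expect to be the main obstacle---is to check that the $K$-module structure coming from the central copy of $K$ inside $A\otimes_kK$ agrees with the $K$-action on the right-hand tensor factor, so that the map is genuinely $K$-linear rather than merely $k$-linear. I would resolve this by tracing the $K$-action through the adjunction: post-composition with $\mathrm{id}_Y\otimes\lambda$ corresponds to the scalar $\lambda$ acting on $F(Y\otimes_kK)$ in the $K$-factor, which matches the action on $\mathrm{Hom}_{\dba}(X,Y)\otimes_kK$.

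For the first consequence I would set $X=Y$. The base-change map $\mathrm{End}_{\dba}(X)\otimes_kK\to\mathrm{End}_{\dbak}(X\otimes_kK)$ is obtained by applying the functor $-\otimes_kK$ to endomorphisms and extending $K$-linearly; as $-\otimes_kK$ is a functor it preserves composition, so this map is a homomorphism of $K$-algebras, and it is bijective by the $\mathrm{Hom}$-formula. Hence it is an isomorphism of $K$-algebras.

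Finally, for $(\mathrm{rad}A)\otimes_kK\simeq\mathrm{rad}(A\otimes_kK)$ I would invoke separability of $K/k$. If $(\mathrm{rad}A)^m=0$, then $((\mathrm{rad}A)\otimes_kK)^m=(\mathrm{rad}A)^m\otimes_kK=0$, so $(\mathrm{rad}A)\otimes_kK$ is a nilpotent two-sided ideal of $A\otimes_kK$ whose quotient is $(A/\mathrm{rad}A)\otimes_kK$. Separability of $K/k$ guarantees that the semisimple algebra $A/\mathrm{rad}A$ remains semisimple after applying $-\otimes_kK$, so this quotient is semisimple; a nilpotent ideal with semisimple quotient is exactly the Jacobson radical, which yields the claimed equality. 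Separability is needed only at this last step, whereas the $\mathrm{Hom}$-formula and the algebra isomorphism hold for any finite field extension $K/k$.
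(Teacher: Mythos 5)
Your proof is correct, but it takes a genuinely different route from the paper, which gives no argument at all for this lemma and simply attributes it to \cite{Z} and \cite{K}. Your argument is self-contained and elementary: the Hom-formula comes from the adjunction $(-\otimes_kK,F)$ --- which the paper has already extended to the derived level --- together with the observation that $F(Y\otimes_kK)\simeq Y^{\oplus n}$ with $n=[K:k]$; note that this isomorphism holds already at the level of complexes (a choice of $k$-basis of $K$ turns the differential $d\otimes\mathrm{id}$ into a diagonal one), which is exactly what justifies invoking it inside $\dba$. Your two follow-up steps are also sound: the composite of the adjunction and additivity isomorphisms is precisely the $K$-linear base-change map $f\otimes\lambda\mapsto\lambda(f\otimes\mathrm{id})$, so setting $X=Y$ yields a $K$-algebra isomorphism; and for the radical you correctly recognize --- despite the lemma's ``in particular'' phrasing --- that this part is \emph{not} a formal consequence of the Hom-formula but requires the classical fact that a finite-dimensional semisimple $k$-algebra stays semisimple under separable base change (provable via the primitive element theorem, writing $K=k[x]/(f)$ with $f$ separable, plus base change of central simple algebras); you quote this fact rather than prove it, which is acceptable since it is standard. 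What your approach buys: independence from \cite{Z} and \cite{K}, and a precise delineation of where separability genuinely enters (only in the radical statement; the Hom-formula and the endomorphism-algebra isomorphism hold for every finite field extension). What the paper's route buys: brevity, and reliance on results that are stated in those references in greater generality than is needed here.
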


\medskip
Recall from \cite{Ri} that a complex $T$ in $\dba$ is called a \textit{tilting complex} of $A$ if, viewing $\dba$ as $\mathbf{K}^{-,\mathrm{b}}(A\mbox{-proj})$,
\begin{enumerate}
	\item $T\in\kba$;
	\item $\mathrm{Hom}_{\dba}(T,T[i])=0$ for all $i\neq 0$;
	\item $\langle T\rangle=\kba$, where $\langle T\rangle$ is the triangulated category generated by direct summands of $T$.
\end{enumerate}
Notice that the shift functor [1] is commutative with $-\otimes_kK$.

\begin{lem}\label{tilcom}
	If $T$ is a tilting complex of $A$, then $T\otimes_kK$ is a tilting complex of $A\otimes_kK$.
\end{lem}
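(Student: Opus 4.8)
The plan is to verify the three defining conditions of a tilting complex for $T\otimes_kK$ over $A\otimes_kK$, drawing on the two facts recorded above: that $-\otimes_kK$ is an exact functor sending finitely generated projectives to finitely generated projectives, and the Hom-isomorphism of the preceding lemma. Condition (1) is immediate, since $-\otimes_kK$ restricts to a functor $\kba\to\kbak$, so $T\in\kba$ gives $T\otimes_kK\in\kbak$. For condition (2) I would combine the commutativity of $[1]$ with $-\otimes_kK$ and the Hom-isomorphism: for every $i\neq 0$,
\[
\mathrm{Hom}_{\dbak}(T\otimes_kK,(T\otimes_kK)[i])\simeq\mathrm{Hom}_{\dba}(T,T[i])\otimes_kK=0,
\]
where the first step uses $(T\otimes_kK)[i]\simeq(T[i])\otimes_kK$ and the final equality is the hypothesis $\mathrm{Hom}_{\dba}(T,T[i])=0$.

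The remaining and main point is condition (3), namely $\langle T\otimes_kK\rangle=\kbak$. The inclusion $\langle T\otimes_kK\rangle\subseteq\kbak$ is condition (1). For the reverse inclusion, the key observation is that $-\otimes_kK$ is a triangulated, additive functor, so the full subcategory $\{X\in\kba : X\otimes_kK\in\langle T\otimes_kK\rangle\}$ is closed under shifts, cones and direct summands; as it contains $T$, it must coincide with $\langle T\rangle=\kba$. In particular $A\otimes_kK\in\langle T\otimes_kK\rangle$, and since $A\otimes_kK$ generates $\kbak$ as a thick subcategory, we conclude $\kbak\subseteq\langle T\otimes_kK\rangle$. (Alternatively one may invoke separability directly: each $Y\in\kbak$ is a direct summand of $F(Y)\otimes_kK$ with $F(Y)\in\kba=\langle T\rangle$, so $F(Y)\otimes_kK$, and hence its summand $Y$, lies in $\langle T\otimes_kK\rangle$.)

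I expect the only step requiring genuine care to be condition (3): one must confirm that pulling back along the exact functor $-\otimes_kK$ preserves the full thick-subcategory structure, in particular closure under direct summands, and then reduce the generation of all of $\kbak$ to the membership of the single object $A\otimes_kK$ in $\langle T\otimes_kK\rangle$. Conditions (1) and (2) are formal consequences of the functorial properties of $-\otimes_kK$ and the Hom-isomorphism, so the substance of the lemma lies entirely in transporting the generation hypothesis through the extension functor.
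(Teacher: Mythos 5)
Your proposal is correct, and for conditions (1) and (2) it coincides with the paper's proof (the same shift-commutation plus Hom-isomorphism computation). For condition (3), however, your primary argument takes a genuinely different route: the paper argues pointwise via separability, writing each $Y\in\kbak$ as a direct summand of $F(Y)\otimes_kK$ and pushing the generation of $F(Y)$ by $T$ through the triangulated functor $-\otimes_kK$, whereas you instead consider the thick subcategory $\{X\in\kba : X\otimes_kK\in\langle T\otimes_kK\rangle\}$, observe it contains $T$ and hence equals $\kba=\langle T\rangle$, deduce $A\otimes_kK\in\langle T\otimes_kK\rangle$, and finish with the standard fact that the free module $A\otimes_kK$ generates $\kbak$ as a thick subcategory. (Your parenthetical alternative is exactly the paper's argument.) The trade-off: the paper's proof is shorter but leans on separability of $K/k$, a standing hypothesis of the paper; your thick-subcategory argument makes no use of separability at all, so it establishes the lemma for arbitrary field extensions for which the Hom-isomorphism of condition (2) is available, which is a mild but real gain in generality. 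Both arguments are complete as written; the only facts you invoke without proof --- closure of the preimage subcategory under shifts, cones and summands, and generation of $\kbak$ by $A\otimes_kK$ --- are standard and need no elaboration.
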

\begin{proof}
	We check the three conditions of tilting complexes for $T\otimes_kK$. The first one is obvious and the second one is by the isomorphisms\begin{align*}
	&\mathrm{Hom}_{\dbak}(T\otimes_kK,T\otimes_kK[i])\\\simeq&\mathrm{Hom}_{\dbak}(T\otimes_kK,T[i]\otimes_kK)\\\simeq&\mathrm{Hom}_{\dba}(T,T[i])\otimes_kK=0.
	\end{align*}
	
	For the last condition. For each $Y$ in $\kbak$, $F(Y)\in\kba$ can be generated by direct summands of $T$ because $T$ is a tilting complex. Since $-\otimes_kK$ is an additive functor and maps a triangle in $\kba$ into a triangle in $\kbak$, $F(Y)\otimes_kK$ can be generated by direct summands of $T\otimes_kK$. Hence as a direct summand of $F(Y)\otimes_kK$, $Y$ can be generated by direct summands of $T\otimes_kK$.
\end{proof}

Recall that two algebras $A$ and $B$ are derived equivalent if and only if there is a tilting complex $T$ in $\dba$ such that $\mathrm{End}_{\dba}(T)^{\mathrm{op}}\simeq B$; see \cite{Ri}.

\begin{lem}\label{deot}
	If $A$ is derived equivalent to $B$, then $A\otimes_kK$ is derived equivalent to $B\otimes_kK$.
\end{lem}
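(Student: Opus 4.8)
The plan is to use the characterization of derived equivalence via tilting complexes, which was just recalled: $A$ is derived equivalent to $B$ precisely when there exists a tilting complex $T$ in $\dba$ with $\mathrm{End}_{\dba}(T)^{\mathrm{op}} \simeq B$. So I begin by unpacking the hypothesis. Since $A$ is derived equivalent to $B$, I can fix such a tilting complex $T \in \dba$ together with an algebra isomorphism $\mathrm{End}_{\dba}(T)^{\mathrm{op}} \simeq B$. My goal is then to produce a tilting complex $T'$ in $\dbak$ whose endomorphism algebra, taken opposite, is isomorphic to $B \otimes_k K$. The natural candidate is $T' = T \otimes_k K$, and the two ingredients I need are already essentially in hand.

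First I would invoke Lemma \ref{tilcom}, which tells me directly that $T \otimes_k K$ is a tilting complex of $A \otimes_k K$. This disposes of the three defining conditions of a tilting complex without further work, since that lemma was proved precisely for this purpose. So the only remaining task is to identify the endomorphism algebra of $T \otimes_k K$ in $\dbak$.

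For that I would apply the displayed ``In particular'' clause of the first lemma (the Keller--Zimmermann isomorphism), which gives an isomorphism of $K$-algebras
\begin{equation*}
\mathrm{End}_{\dbak}(T \otimes_k K) \simeq \mathrm{End}_{\dba}(T) \otimes_k K.
\end{equation*}
Passing to opposite algebras commutes with tensoring by a field, so taking the opposite on both sides and using the chosen isomorphism $\mathrm{End}_{\dba}(T)^{\mathrm{op}} \simeq B$ yields
\begin{equation*}
\mathrm{End}_{\dbak}(T \otimes_k K)^{\mathrm{op}} \simeq \mathrm{End}_{\dba}(T)^{\mathrm{op}} \otimes_k K \simeq B \otimes_k K.
\end{equation*}
Combining this with the fact that $T \otimes_k K$ is a tilting complex, the Rickard criterion immediately delivers that $A \otimes_k K$ is derived equivalent to $B \otimes_k K$, which is the assertion.

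I do not anticipate a serious obstacle here, since this lemma is essentially an assembly of the two preceding results. The only point that requires a moment of care is checking that the $K$-algebra isomorphism is genuinely an isomorphism of algebras (not merely of $K$-vector spaces) and that it respects the opposite-algebra structure, so that the composite identification $\mathrm{End}_{\dbak}(T\otimes_kK)^{\mathrm{op}} \simeq B \otimes_k K$ is valid as algebras. This is guaranteed by the ``isomorphism of $K$-algebras'' phrasing in the first lemma, so the argument goes through cleanly.
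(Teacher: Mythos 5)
Your proof is correct and follows exactly the same route as the paper: take a tilting complex $T$ with $\mathrm{End}_{\dba}(T)^{\mathrm{op}}\simeq B$, apply Lemma \ref{tilcom} to get that $T\otimes_kK$ is a tilting complex of $A\otimes_kK$, and then identify $\mathrm{End}_{\dbak}(T\otimes_kK)^{\mathrm{op}}\simeq B\otimes_kK$ via the $K$-algebra isomorphism from the first lemma. Your extra remark about the isomorphism respecting the algebra (not just vector space) structure and the opposite-algebra operation is a sensible point of care, though the paper treats it as immediate.
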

\begin{proof}
	Let $T$ be a tilting complex of $A$ such that $\mathrm{End}_{\dba}(T)^{\mathrm{op}}\simeq B$. By Lemma \ref{tilcom}, $T\otimes_kK$ is a tilting complex of $A\otimes_kK$. Then $A\otimes_kK$ is derived equivalent to $B\otimes_kK$ by the following isomorphisms of algebras: $$\mathrm{End}_{\dbak}(T\otimes_kK)^{\mathrm{op}}\simeq\mathrm{End}_{\dba}(T)^{\mathrm{op}}\otimes_kK\simeq B\otimes_kK.$$
\end{proof}
\begin{rem}
	The converse of the above lemma is not true. For example, take $k$ as the field of real numbers and $K$ as the field of complex numbers. Let $A$ be $k$ and $B$ be the quaternion algebra over $k$. Then $A\otimes_kK$ and $B\otimes_kK$ are both derived equivalent to $K$, while $A$ and $B$ are not derived equivalent.
\end{rem}

\subsection{Action of the Galois group}
In this subsection we further assume that $K/k$ is a finite Galois extension. Let $G$ be the Galois group of $K/k$. For each $g$ in $G$ and $\lambda$ in $K$, denote by $^g\lambda$ the action of $g$ on $\lambda$. 

For each $g$ in $G$ and $M$ in $\akmod$, define $^gM\in\akmod$ as follows. As a set, $^gM$ is identified with $M$. The action of $A\otimes_kK$ on $^gM$ is given by $$(a\otimes\lambda)\cdot m=(a\otimes{^g\lambda})m\mbox{, }\forall a\otimes\lambda\in A\otimes_kK\mbox{, }m\in {^gM}.$$
Then $g$ induces a $k$-linear (not $K$-linear) automorphism of $\akmod$:
$$^g(-)\colon \akmod\longrightarrow \akmod\mbox{, }
M\longmapsto {^gM}.$$
For each homomorphism $f\colon M\rightarrow N$ in $\akmod$, $^gf\colon{^gM}\rightarrow{^gN}$ is given by $$(^gf)(m)=f(m)\mbox{, }\forall m\in M.$$ 

So $^g(-)$ is a functor with inverse $^{g^{-1}}(-)$. 

The functor $^g(-)$ is exact and can be extended naturally to a $k$-liner autofunctor of $\dbak$. We still denote its derived functor by $^g(-)$. The notation $X\,|\,Y$ means that $X$ is a direct summand of $Y$. 

\begin{lem}\label{fef}
	Keep the notations above, we have \\
	%{\rm (1)} For each $X\in\dba$ and $g\in G$, $^g(X\otimes_kK)\simeq X\otimes_kK$ in $\dbak$. \\
	{\rm (1)} For each $M\in\dbak$, $F(M)\otimes_kK\simeq\opg{^gM}$ in $\dbak$.\\
	{\rm (2)} For each indecomposable object $M\in\dbak$, there is an indecomposable object $X\in\dba$ such that $X\,|\,F(M)$ and $M\,|\,X\otimes_kK$. \\
	{\rm (3)} For each indecomposable object $X\in\dba$, there is an indecomposable object $M\in\dbak$ such that $X\,|\,F(M)$ and $M\,|\,X\otimes_kK$. If there is another indecomposable object $N\in\dbak$ satisfying that $N\,|\,X\otimes_kK$, then there is a $g\in G$ such that $N\simeq{^gM}$. 
\end{lem}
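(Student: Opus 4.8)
The plan is to establish (1) first, since (2) and (3) will then follow formally from the separability of the functors together with the Krull--Schmidt property of $\dba$ and $\dbak$ (both are bounded derived categories of finite-dimensional algebras, hence Hom-finite and idempotent complete). The engine behind (1) is the classical Galois descent isomorphism of $K$-algebras $K\otimes_kK\simeq\prod_{g\in G}K$, sending $\lambda\otimes\mu$ to $(\lambda\cdot{^g\mu})_{g\in G}$; this is an isomorphism precisely because $K/k$ is Galois. I would prove (1) first on the level of module categories and then transport it to $\dbak$ by naturality and exactness.

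For (1) on $\akmod$: given $M\in\akmod$, the underlying space of $F(M)\otimes_kK$ is $M\otimes_kK$, on which $K\otimes_kK$ acts through the original $K$-action of $M$ on the left tensorand and by multiplication on the right. The orthogonal idempotents $e_g$ furnished by the decomposition $K\otimes_kK\simeq\prod_{g\in G}K$ then split $F(M)\otimes_kK=\opg e_g(F(M)\otimes_kK)$ as $A\otimes_kK$-modules. A direct computation shows that each summand is, as an $A\otimes_kK$-module, the twist of $M$ by the corresponding group element; since $g$ ranges over all of $G$, this yields $F(M)\otimes_kK\simeq\opg{^gM}$. This isomorphism is natural in $M$, so applying it termwise to a complex and invoking the exactness of $F$, $-\otimes_kK$ and $^g(-)$ extends it to the asserted isomorphism in $\dbak$.

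For (2): separability gives $M\,|\,F(M)\otimes_kK$. Writing $F(M)=\bigoplus_i X_i$ as a sum of indecomposables in $\dba$, we get $F(M)\otimes_kK=\bigoplus_i X_i\otimes_kK$; since $M$ is indecomposable, Krull--Schmidt in $\dbak$ forces $M\,|\,X_i\otimes_kK$ for some $i$, and $X:=X_i$ satisfies $X\,|\,F(M)$ by construction. For the existence part of (3): separability gives $X\,|\,F(X\otimes_kK)$; decomposing $X\otimes_kK=\bigoplus_j M_j$ into indecomposables in $\dbak$ and applying $F$, the indecomposability of $X$ forces $X\,|\,F(M_j)$ for some $j$, so $M:=M_j$ works.

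For the uniqueness part of (3), which is where (1) is really used: since $X\,|\,F(M)$, applying $-\otimes_kK$ and (1) gives $X\otimes_kK\,|\,F(M)\otimes_kK\simeq\opg{^gM}$. Each $^gM$ is indecomposable because $^g(-)$ is an autoequivalence of $\dbak$; hence by Krull--Schmidt every indecomposable summand of $X\otimes_kK$ is isomorphic to some $^gM$. As $N\,|\,X\otimes_kK$ is indecomposable, $N\simeq{^gM}$ for a suitable $g$. The main obstacle is the careful bookkeeping of module structures in the proof of (1)---pinning down the $K\otimes_kK$-action and identifying each isotypic summand $e_g(F(M)\otimes_kK)$ with $^gM$ exactly right---after which (2) and (3) are formal consequences of separability and Krull--Schmidt.
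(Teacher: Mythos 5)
Your proof is correct and follows essentially the same route as the paper: part (1) is established at the level of modules and extended termwise to complexes, and parts (2) and (3) follow from separability of the adjoint pair $(-\otimes_kK,F)$ together with the Krull--Schmidt property, exactly as in the paper. The only cosmetic difference is in (1), where you split $F(M)\otimes_kK$ by the idempotents of $K\otimes_kK\simeq\prod_{g\in G}K$, while the paper writes the resulting isomorphism explicitly as $m\otimes\lambda\mapsto(({}^g\lambda)m)_{g\in G}$ and checks compatibility with the differentials; these produce the same isomorphism, and your version has the mild advantage of making its bijectivity immediate from Galois descent.
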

\begin{proof}
%	(1). For each $X=(X^i,d^i)\in\dba$ and $g\in G$, we have an isomorphism of $A\otimes_kK$-modules for each $i\in\mathbb{Z}$:
%	\begin{align*}
%	\phi^i\colon {^g(X^i\otimes_kK)}&\longrightarrow X\otimes_kK\\
%	x\otimes\lambda&\longmapsto x\otimes{^{g^{-1}}\lambda}, \forall x\in X^i, \lambda\in K
%	\end{align*} 
%	Since $(d^{i+1}\otimes\mathrm{Id})\phi^{i}=\phi^{i+1}(d^i\otimes\mathrm{Id})$, we obtain the isomorphism $$\phi=(\phi^i)_{i\in\mathbb{Z}}\colon^g(X\otimes_kK)\simeq X\otimes_kK$$ in $\dbak$. 
	
	(1). For each $M=(M^i,d^i)\in\dbak$, we have an isomorphism of $A\otimes_kK$-modules for each $i\in\mathbb{Z}$:
	$$\phi^i\colon F(M^i)\otimes_kK\longrightarrow\opg{^g(M^i)}\mbox{, }
	m\otimes\lambda\longmapsto((^g\lambda) m)_{g\in G}\mbox{, }\forall m\in M^i\mbox{, }\lambda\in K.$$
	Since the following diagram is commutative,
	$$\xymatrix@C=10ex{F(M^i)\otimes_kK\ar[r]^{d^i\otimes\mathrm{Id}}\ar[d]^{\phi^{i}}&F(M^{i+1})\otimes_kK\ar[d]^{\phi^{i+1}}\\\opg{^g(M^i)}\ar[r]^{\mathrm{diag}((^g(d^i))_{g\in G})}&\opg{^g(M^{i+1})}}$$ we obtain an isomorphism $$\phi=(\phi^i)_{i\in\mathbb{Z}}\colon F(M)\otimes_kK\simeq\opg{^gM}$$ in $\dbak$.
	
	(2). Since the functor $F$ is separable, we have $M\,|\,F(M)\otimes_kK$ in $\dbak$. Thus there is an indecomposable direct summand $X$ of $F(M)$ such that $M\,|\,X\otimes_kK$.
	
	(3). Since the functor $-\otimes_kK$ is separable, we have $X\,|\,F(X\otimes_kK)$ in $\dba$. Thus there is an indecomposable direct summand $M$ of $X\otimes_kK$ such that $X\,|\,F(M)$. 
	
	If $N\in\dbak$ is another indecomposable object such that $N\,|\,X\otimes_kK$, then $N\,|\,F(M)\otimes_kK\simeq\opg{^gM}$ by (1). Since $\dbak$ is Krull-Schmidt, there is a $g\in G$ such that $N\simeq{^gM}$.
\end{proof}

\begin{defn}
	Given an indecomposable object $X$ in $\dba$ and an indecomposable object $M$ in $\dbak$, we say that $M$ and $X$ are \textbf{relative} if $X\,|\,F(M)$ and $M\,|\,X\otimes_kK$.
\end{defn}

\begin{lem}\label{bridge}
	Let $X$ and $Y$ be two indecomposable objects in $\dba$. Assume that $M$ and $N$ are indecomposable objects in $\dbak$ and relative with $X$ and $Y$ respectively. Then for each non-zero non-isomorphism $\phi\colon X\rightarrow Y$, there is a non-zero non-isomorphism $\psi\colon M\rightarrow {^gN}$ in $\dbak$ for some $g$ in $G$.
\end{lem}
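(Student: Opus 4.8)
The plan is to push $\phi$ through the base-change functor, read off a single matrix entry matching the summands $M$ and $N$, and then use a Galois twist to put that entry into the required form. First I would apply $-\otimes_kK$ to $\phi$ to get $\phi\otimes\mathrm{Id}\colon X\otimes_kK\to Y\otimes_kK$. This morphism is nonzero: under the Hom-isomorphism of the earlier lemma, $\mathrm{Hom}_{\dbak}(X\otimes_kK,Y\otimes_kK)\simeq\mathrm{Hom}_{\dba}(X,Y)\otimes_kK$, the morphism $\phi\otimes\mathrm{Id}$ corresponds to $\phi\otimes 1$, which is nonzero because $\phi\neq 0$ and $K$ is free over $k$. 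By Lemma \ref{fef}(3), every indecomposable summand of $X\otimes_kK$ is isomorphic to some ${}^{g}M$ and every indecomposable summand of $Y\otimes_kK$ to some ${}^{h}N$. Fixing such decompositions, $\phi\otimes\mathrm{Id}$ must have a nonzero component $\pi\circ(\phi\otimes\mathrm{Id})\circ\iota\colon{}^{g}M\to{}^{h}N$ for a suitable summand inclusion $\iota$ and projection $\pi$.

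Next I would apply the $k$-linear autoequivalence ${}^{g^{-1}}(-)$ to this component. Since ${}^{g^{-1}}({}^{g}M)\simeq M$ and ${}^{g^{-1}}({}^{h}N)\simeq{}^{g'}N$ for a suitable $g'\in G$, and an equivalence is faithful, this produces a nonzero morphism $\psi\colon M\to{}^{g'}N$. Thus obtaining a nonzero $\psi$ is routine; the real content of the lemma is to guarantee that $\psi$ is not an isomorphism, since a nonzero map between a single indecomposable summand on each side can perfectly well be invertible even though $\phi$ is not.

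The main obstacle is precisely this non-isomorphism claim, and my plan to overcome it is to show that $\phi\otimes\mathrm{Id}$ lies in the categorical radical $\mathrm{rad}_{\dbak}(X\otimes_kK,Y\otimes_kK)$. The key input is that, since $K/k$ is separable, the radical is compatible with the base change: for the finite-dimensional $k$-algebra $R=\mathrm{End}_{\dba}(X\oplus Y)$ one has $\mathrm{rad}(R\otimes_kK)\simeq(\mathrm{rad}R)\otimes_kK$, exactly in the spirit of the displayed radical isomorphism of the earlier lemma (separability keeps $R/\mathrm{rad}R$ semisimple after tensoring with $K$). Combining this with $\mathrm{End}_{\dbak}((X\oplus Y)\otimes_kK)\simeq R\otimes_kK$ and restricting to the $(X,Y)$-component of the endomorphism algebra of the direct sum, I would obtain $\mathrm{rad}_{\dbak}(X\otimes_kK,Y\otimes_kK)\simeq\mathrm{rad}_{\dba}(X,Y)\otimes_kK$. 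Since $\phi$ is a non-isomorphism between indecomposables it lies in $\mathrm{rad}_{\dba}(X,Y)$, so $\phi\otimes\mathrm{Id}$ lies in the radical on the base-changed side.

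Finally I would conclude as follows. As the radical is a two-sided ideal, the component $\pi\circ(\phi\otimes\mathrm{Id})\circ\iota$ lies in $\mathrm{rad}_{\dbak}({}^{g}M,{}^{h}N)$; radical morphisms are preserved by the equivalence ${}^{g^{-1}}(-)$, so $\psi\in\mathrm{rad}_{\dbak}(M,{}^{g'}N)$. Because $M$ and ${}^{g'}N$ are indecomposable with local endomorphism rings, any radical morphism between them is automatically a non-isomorphism, and since $\psi\neq 0$ it is the desired nonzero non-isomorphism. Everything here except the radical-compatibility is a formal manipulation of the adjunction, the Galois twists, and the Krull--Schmidt property; I would therefore isolate the separable radical identity as the one delicate point carrying the whole argument.
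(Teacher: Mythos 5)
Your proof is correct and follows essentially the same route as the paper's: base-change $\phi$, extract a nonzero component between Galois twists of $M$ and $N$, untwist by ${}^{g^{-1}}(-)$, and establish non-isomorphism via radical membership resting on the separability identity $\mathrm{rad}(\Lambda\otimes_kK)\simeq(\mathrm{rad}\Lambda)\otimes_kK$. The only, immaterial, difference is bookkeeping: the paper verifies $\phi\otimes\mathrm{Id}\in\mathrm{rad}_{\dbak}(X\otimes_kK,Y\otimes_kK)$ directly from the definition of the categorical radical using $\mathrm{End}_{\dba}(X)$, whereas you package the same fact as the $(X,Y)$-corner of $\mathrm{rad}\bigl(\mathrm{End}_{\dba}(X\oplus Y)\otimes_kK\bigr)$.
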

\begin{proof}
	By Lemma \ref{fef} (3), up to isomorphism, each indecomposable direct summand of $X\otimes_kK$ and $Y\otimes_kK$ belongs to $\{^gM\,|\,g\in G\}$ and $\{^gN\,|\,g\in G\}$, respectively. Since $$\phi\otimes\mathrm{Id}\colon X\otimes_kK\longrightarrow Y\otimes_kK$$ is non-zero, there exist $h$ and $l$ in $G$ such that $\pi_{^lN}\circ(\phi\otimes\mathrm{Id})\circ\mathrm{inc}_{^hM}\neq0$, where $\mathrm{inc}_{^hM}\colon{^hM}\rightarrow X\otimes_kK$ is the embedding morphism and $\pi_{^lN}\colon X\otimes_kK\rightarrow{^lN}$ the projection morphism. 
	
	Let $$\psi:={^{h^{-1}}(\pi_{^lN}\circ(\phi\otimes\mathrm{Id})\circ\mathrm{inc}_{^hM})}\colon M\longrightarrow {^{h^{-1}l}N}.$$ Since $^{h^{-1}}(-)$ is an isomorphism, $\psi$ is non-zero. 
	
	We claim that $\psi$ is a non-isomorphism. Recall that in a Krull-Schmidt category, a morphism between two indecomposable objects is a non-isomorphism if and only if it belongs to the radical of the category; see \cite[A.3 Proposition 3.5]{ASS}. For each $$f\otimes\lambda\in\mathrm{Hom}_{\dba}(Y,X)\otimes_kK\simeq\mathrm{Hom}_{\dbak}(Y\otimes_kK,X\otimes_kK),$$ $f\circ\phi$ is a non-isomorphism, which implies that
	\begin{align*}
	(f\otimes\lambda)\circ(\phi\otimes\mathrm{Id})&\in\mathrm{rad}_{\dba}(X,X)\otimes_kK\\&=\mathrm{rad}(\mathrm{End}_{\dba}(X))\otimes_kK\\
	&\simeq\mathrm{rad}(\mathrm{End}_{\dbak}(X\otimes_kK)).
	%\\&=\mathrm{rad}_{\dbak}(X\otimes_kK,X\otimes_kK).
	\end{align*} 
	According to \cite[A.3 Definition 3.3]{ASS}, $\phi\otimes\mathrm{Id}\in\mathrm{rad}_{\dbak}(X\otimes_kK,Y\otimes_kK)$. By \cite[A.3 Lemma 3.4]{ASS}, we have that $$\psi\in\mathrm{rad}_{\dbak}(M,{^{h^{-1}l}N}).$$ Therefore $\psi$ is a non-isomorphism. 
\end{proof}

\section{piecewise hereditary algebras under field extensions}

\subsection{Piecewise hereditary algebras}
In this section, we recall some knowledge about piecewise hereditary algebras and investigate related properties under base field extension.

According to \cite{HR}, the \textit{strong global dimension} of a $k$-algebra $A$ is defined by $$\mbox{s.gl.}\dim A=\sup\{l(P)\,|\,0\neq P\in\kba\text{ indecomposable}\},$$ where $$l(P=(P^i,d^i))=\max\{b-a\,|\,P^b\neq 0,P^a\neq 0\}$$ is the length of $P\neq0$.

\begin{lem}\label{ph}
	Let $A$ be a $k$-algebra and $K/k$ be a finite separable field extension. Then $$\mathrm{s.gl}\dim A=\mathrm{s.gl}\dim A\otimes_kK.$$
\end{lem}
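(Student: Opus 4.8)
The plan is to show that the two functors $-\otimes_kK$ and $F$ each preserve the length of complexes, and then to transport indecomposable objects across the extension using the separability facts recalled before the lemma. Throughout I would fix the convention that, since every object of $\kba$ (resp. $\kbak$) has a unique minimal representative --- a complex whose differentials have entries in $\mathrm{rad}\,A$ (resp. $\mathrm{rad}(A\otimes_kK)$) --- the number $l(P)$ is computed on this minimal complex, so that $\mathrm{s.gl.dim}\,A$ is the supremum of $l$ over indecomposable minimal complexes. With this convention, if $X\,|\,Y$ and $Y$ is minimal, then $X$ has a minimal representative that is a genuine direct summand of the complex $Y$ (by uniqueness of minimal models), so comparing supports term by term gives $l(X)\le l(Y)$.

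The key technical step is that both functors preserve minimality. Using the isomorphism $(\mathrm{rad}\,A)\otimes_kK\simeq\mathrm{rad}(A\otimes_kK)$ from the first lemma, one checks that $\mathrm{rad}_{A\otimes_kK}(P^i\otimes_kK)=\mathrm{rad}_A(P^i)\otimes_kK$ and that the radical of an $A\otimes_kK$-module and of its restriction along $F$ coincide as subsets, since $(\mathrm{rad}\,A\otimes_kK)N=(\mathrm{rad}\,A\otimes 1)N=\mathrm{rad}_A(F(N))$. Hence if $P=(P^i,d^i)$ is minimal over $A$ then $P\otimes_kK=(P^i\otimes_kK,d^i\otimes\mathrm{Id})$ is minimal over $A\otimes_kK$, and if $Q$ is minimal over $A\otimes_kK$ then $F(Q)$ is minimal over $A$. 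Moreover neither functor changes the support of a complex, because $P^i\otimes_kK=0\iff P^i=0$ and $F(Q^i)=0\iff Q^i=0$; therefore $l(X\otimes_kK)=l(X)$ for every $X\in\kba$ and $l(F(M))=l(M)$ for every $M\in\kbak$.

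It then remains to compare the two suprema. For $\mathrm{s.gl.dim}\,A\le\mathrm{s.gl.dim}\,A\otimes_kK$, I would take an indecomposable $X\in\kba$, write $X\otimes_kK=\bigoplus_jM_j$ with each $M_j$ indecomposable, and invoke separability to get $X\,|\,F(X\otimes_kK)=\bigoplus_jF(M_j)$; as $X$ is indecomposable it is a summand of some $F(M_{j_0})$, whence $l(X)\le l(F(M_{j_0}))=l(M_{j_0})\le\mathrm{s.gl.dim}\,A\otimes_kK$ by the monotonicity and length-preservation above. Taking the supremum over $X$ yields the inequality. The reverse inequality is symmetric: for an indecomposable $M\in\kbak$ one uses $M\,|\,F(M)\otimes_kK=\bigoplus_i(X_i\otimes_kK)$ where $F(M)=\bigoplus_iX_i$, finds an indecomposable $X_{i_0}$ with $M\,|\,X_{i_0}\otimes_kK$, and concludes $l(M)\le l(X_{i_0}\otimes_kK)=l(X_{i_0})\le\mathrm{s.gl.dim}\,A$.

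I expect the main obstacle to be the preservation of minimality, i.e. verifying that tensoring up and restricting down cannot turn a radical-valued differential into one with an invertible entry; this is precisely where the separability of $K/k$ enters, through the radical isomorphism of the first lemma, and without it the numerical length could drop after applying a functor and the argument would collapse. Once minimality and support are controlled, the remainder is formal bookkeeping with supports and Krull--Schmidt decompositions.
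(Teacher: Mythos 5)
Your proposal is correct and takes essentially the same approach as the paper: separability supplies the direct-summand relations $X\,|\,F(X\otimes_kK)$ and $M\,|\,F(M)\otimes_kK$, one transports indecomposables across the extension via Krull--Schmidt, and the two inequalities follow because $-\otimes_kK$ and $F$ preserve lengths while direct summands cannot have larger length. The only difference is one of rigor: the paper asserts $l(F(P\otimes_kK))=l(P\otimes_kK)$ and the monotonicity of $l$ on summands without comment, whereas you justify both through the minimal-representative convention and the isomorphism $(\mathrm{rad}\,A)\otimes_kK\simeq\mathrm{rad}(A\otimes_kK)$, which is a legitimate filling-in of details the paper leaves implicit.
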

\begin{proof}
	First, we prove that $$\mbox{s.gl.}\dim A\leq\mbox{s.gl.}\dim A\otimes_kK.$$ Indeed, for each indecomposable $P$ in $\kba$, since $F$ is separable, $P$ is a direct summand of $F(P\otimes_kK)$ in $\kba$. The length of each direct summand of $P\otimes_kK$ in $\kbak$ is not larger than $\mbox{s.gl.}\dim A\otimes_kK$. As $l(F(P\otimes_kK))=l(P\otimes_kK)$, and each indecomposable direct summand does not have larger length, we have that $l(P)\leq\mbox{s.gl.}\dim A\otimes_kK$.
	
	Dually, we can prove that $$\mbox{s.gl.}\dim A\geq\mbox{s.gl.}\dim A\otimes_kK.$$ So our statement holds.
\end{proof}

Recall again that a finite-dimensional $k$-algebra $A$ is called \emph{piecewise hereditary} of type $H$ if it is derived equivalent to a hereditary abelian $k$-category $H$; $A$ is called \textit{quasi-tilted} if there is ablelian $k$-category $H$ with tilting object $T$ such that $A\simeq\mathrm{End}_H(T)^\mathrm{op}$; see \cite{HR}.

The following homological description is due to \cite{HR}.

\begin{lem}
	Let $A$ be a $k$-algebra.\\
	{\rm (1)} $A$ is piecewise hereditary if and only if $\mathrm{s.gl.}\dim A<\infty$. \\
	{\rm (2)} $A$ is quasi-tilted if and only if $\mathrm{s.gl.}\dim A\leq 2$.
\end{lem}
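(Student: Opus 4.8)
Because the statement is quoted from \cite{HR}, a proof really amounts to recalling the argument of Happel and Zacharia; I will describe how I would organize it and where the genuine difficulty lies. The running observation is the elementary inequality $\mathrm{gl.dim}\,A\le\mathrm{s.gl.dim}\,A$: the minimal projective resolution of an indecomposable module of finite projective dimension is an indecomposable complex in $\kba$ whose length equals that projective dimension, while truncating an infinite resolution produces indecomposable perfect complexes of unbounded length. Hence $\mathrm{s.gl.dim}\,A<\infty$ forces $\mathrm{gl.dim}\,A<\infty$, so the canonical functor $\kba\to\dba$ is an equivalence and every indecomposable perfect complex may be read off inside $\dba$. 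This reduction is what lets me pass freely between the two categories in all that follows.

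For the easy implication of (1), the plan is to assume $A$ piecewise hereditary, note that then $\mathrm{gl.dim}\,A<\infty$ (Happel), so $\kba=\dba$, and reduce by Happel's classification to an equivalence $\dba\simeq\mathbf{D}^{\mathrm{b}}(B)$ with $B$ hereditary or canonical. By Rickard's theorem this equivalence is realized by a two-sided tilting complex concentrated in a bounded range of degrees. When $B$ is hereditary the indecomposables of $\mathbf{D}^{\mathrm{b}}(B)$ are stalks $X[n]$, and when $B$ is canonical the analogous hereditary abelian category $\mathrm{coh}\,\mathbb{X}$ again has stalky indecomposables whose images under $\mathrm{RHom}$ have cohomological spread at most one. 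Applying the (bounded) two-sided tilting complex, the corresponding indecomposable complexes in $\kba$ then have their standard cohomology confined to a fixed band independent of the object; combined with $d=\mathrm{gl.dim}\,A<\infty$, their minimal projective representatives have length bounded in terms of the band width and $d$, so $\mathrm{s.gl.dim}\,A<\infty$.

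The converse implication of (1), that $\mathrm{s.gl.dim}\,A<\infty$ forces $A$ to be piecewise hereditary, is the genuinely hard part and the heart of \cite{HR}; this is the step I expect to be the main obstacle. Here I would exploit that a finite bound on the lengths of indecomposable perfect complexes rigidifies the Auslander--Reiten structure of $\dba$: one analyses the shapes of the components of the Auslander--Reiten quiver and extracts, inside $\dba$, a hereditary abelian generating subcategory, equivalently a tilting complex whose endomorphism algebra is hereditary (or canonical). Manufacturing this hereditary heart out of nothing more than the length bound is the delicate point, and I do not expect a short route around it.

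For (2), the plan is to treat it as the quantitative refinement of (1) at the value $2$. First, $\mathrm{s.gl.dim}\,A\le 2$ gives $\mathrm{gl.dim}\,A\le 2$ and, being finite, makes $A$ piecewise hereditary by (1); moreover the sharper bound means every indecomposable perfect complex is, up to shift, a stalk or a two-term complex. Translating this through minimal projective resolutions, I would recover exactly the Happel--Reiten--Smal\o{} dichotomy that each indecomposable $A$-module has projective dimension at most $1$ or injective dimension at most $1$, which with $\mathrm{gl.dim}\,A\le 2$ characterizes quasi-tilted algebras. Conversely, if $A\simeq\mathrm{End}_H(T)$ for a tilting object $T$ in a hereditary abelian $H$, then $\mathrm{gl.dim}\,A\le 2$ and the torsion pair attached to $T$ confines indecomposable perfect complexes to length at most $2$, giving $\mathrm{s.gl.dim}\,A\le 2$. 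Thus once (1) and the Happel--Reiten--Smal\o{} theorem are in hand, (2) follows by bookkeeping on complex lengths.
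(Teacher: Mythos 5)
The paper offers no proof of this lemma whatsoever: it is quoted as a known result (``The following homological description is due to \cite{HR}''), and in fact the characterization is Happel--Zacharia's theorem (item \cite{H} in the bibliography; the text's citation key points at the Happel--Reiten paper, an apparent slip), used as a black box to deduce Corollary \ref{phqtufe}. So there is no in-paper argument to compare yours against; the only question is whether your reconstruction would stand on its own. Your preliminary reduction is sound: $\mathrm{gl.dim}\,A\le\mathrm{s.gl.dim}\,A$ does follow from minimal resolutions and their brutal truncations (the truncations are indeed indecomposable --- any complement would be a projective stalk sitting inside the radical, hence zero by Nakayama), and your sketch of the forward direction of (1) is the standard argument: indecomposables of $\mathbf{D}^{\mathrm{b}}(H)$ are shifted objects of $H$, so their images in $\kba$ have cohomology confined to a band of width governed by the image of $A$, and minimality plus finite global dimension bounds the length.

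However, as a proof the proposal has genuine gaps. First, the converse of (1), which is the entire content of the theorem, is not proved: you flag it yourself as the main obstacle, and ``analyse the Auslander--Reiten components and extract a hereditary abelian generating subcategory'' is a restatement of the goal, not an argument --- nothing in your text explains how the length bound produces such a subcategory. Second, in (2) your claim that $\mathrm{s.gl.dim}\,A\le 2$ makes every indecomposable perfect complex ``up to shift, a stalk or a two-term complex'' misreads the paper's definition of length: since $l(P)=\max\{b-a \mid P^b\neq 0, P^a\neq 0\}$, the bound $2$ permits \emph{three} nonzero terms, and any indecomposable module of projective dimension exactly $2$ --- which exists for every quasi-tilted algebra that is not hereditary --- gives an indecomposable three-term complex of length $2$; your reading would force $\mathrm{gl.dim}\,A\le 1$. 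Relatedly, the converse of (2) is not mere bookkeeping: for $A\simeq\mathrm{End}_H(T)$ one gets that the cohomology of an indecomposable perfect complex sits in two adjacent degrees and $\mathrm{gl.dim}\,A\le 2$, but this only bounds the length by $3$; excluding length-$3$ complexes is precisely where the Happel--Reiten--Smal\o{} dichotomy (each indecomposable has projective or injective dimension at most $1$) has to do real work. So judged as a blind proof the attempt fails at the core of (1) and at the quantitative step of (2); judged as a citation, it does exactly what the paper does, no more and no less.
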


The above lemma and Lemma \ref{ph} immediately imply the following result.

\begin{cor}\label{phqtufe}
	Let $A$ be a $k$-algebra and $K/k$ be a finite separable field extension.\\
	{\rm (1)} The algebra $A$ is piecewise hereditary if and only if so is $A\otimes_kK$. \\
	{\rm (2)} The algebra $A$ is quasi-tilted if and only if so is $A\otimes_kK$.
\end{cor}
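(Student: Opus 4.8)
The plan is to combine Lemma \ref{ph} with the homological criterion recorded in the lemma immediately above (taken from \cite{HR}), applying the latter both to $A$ as a $k$-algebra and to $A\otimes_kK$ as a $K$-algebra. The first thing I would note is that $A\otimes_kK$ is indeed a finite-dimensional $K$-algebra, since $\dim_K(A\otimes_kK)=\dim_kA<\infty$, so that the criterion applies to it verbatim with $K$ playing the role of the base field.

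For part (1), I would then simply read off the chain of equivalences
\begin{align*}
A\ \text{piecewise hereditary} &\iff \mathrm{s.gl}\dim A<\infty\\
&\iff \mathrm{s.gl}\dim (A\otimes_kK)<\infty\\
&\iff A\otimes_kK\ \text{piecewise hereditary},
\end{align*}
where the two outer equivalences come from the lemma above and the middle one is exactly Lemma \ref{ph}. Part (2) is obtained from the identical chain, with every occurrence of the condition ``$<\infty$'' replaced by ``$\leq 2$'' and ``piecewise hereditary'' replaced by ``quasi-tilted''.

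The key point is that there is essentially no remaining obstacle: all of the genuine content has already been isolated in Lemma \ref{ph}, whose proof compares the lengths of indecomposable complexes of projectives under $-\otimes_kK$ and $F$ by exploiting the separability of these functors. The only things to keep straight when assembling the argument are that the two strong global dimensions are computed in the respective homotopy categories $\kba$ and $\kbak$, and that the criterion of \cite{HR} is invoked for the $K$-algebra $A\otimes_kK$ itself rather than for its restriction of scalars to $k$. Once these points are observed, the corollary follows formally.
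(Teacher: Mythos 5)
Your proposal is correct and is exactly the paper's argument: the paper also deduces the corollary immediately by combining Lemma \ref{ph} (equality of strong global dimensions) with the homological characterization of piecewise hereditary and quasi-tilted algebras from \cite{HR}. Your additional remarks (that $A\otimes_kK$ is finite-dimensional over $K$ and that the criterion is applied over the respective base fields) are sound clarifications of points the paper leaves implicit.
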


\subsection{Directing objects and Galois extensions} In this subsection, we prove our main theorem by using directing objects.

Let $X$ and $Y$ be two indecomposable objects in a triangulated category $\mathcal{C}$. Recall from \cite{CR} that a \textit{proper path} from $X$ to $Y$ with length $n$ is defined as a sequence of indecomposable objects $X=X_0,X_1,\dots,X_n=Y$ in $\mathcal{C}$, such that for each $i\in\{1,\dots,n\}$, either $X_i=X_{i-1}[1]$ or there is a non-zero non-isomorphism in $\mathrm{Hom}_{\mathcal{C}}(X_{i-1},X_i)$. An indecomposable object $X$ in $\mathcal{C}$ is called \textit{directing} if there is no proper paths from $X$ to $X$ with length larger than zero.

The following theorem due to \cite{CR} is the main tool we used.

\begin{thm}\cite[Corollary 5.5]{CR}\label{ph-dp}
	Let $A$ be a connected finite-dimensional $k$-algebra. Then $A$ is derived equivalent to a hereditary algebra if and only if $\dba$ contains a directing object. 
\end{thm}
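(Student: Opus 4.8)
The plan is to prove both implications, using throughout the standard description of the derived category of a hereditary algebra together with Auslander--Reiten theory in the triangulated categories involved. Recall that for a hereditary algebra $H$ every indecomposable object of $\mathbf{D}^b(H)$ is isomorphic to $Z[n]$ for a uniquely determined indecomposable $H$-module $Z$ and integer $n$, and that $\mathrm{Hom}_{\mathbf{D}^b(H)}(Z[n],W[m])\simeq\mathrm{Ext}^{m-n}_H(Z,W)$, which vanishes unless $m-n\in\{0,1\}$. (I use the standard convention that $X$ is directing when it lies on no proper cycle, i.e. there is no proper path from $X$ to itself of positive length.)

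For the implication that a hereditary-type algebra carries a directing object, the key observation is that the degree function $Z[n]\mapsto n$ is weakly increasing along proper paths: a shift step raises it by $1$, while a step given by a non-zero map $Z[n]\to W[m]$ forces $m-n\in\{0,1\}$ by the Hom/Ext computation above, hence also raises it by $0$ or $1$. Consequently a proper path from $X$ to $X$ of positive length must have constant degree, so it uses no shift steps and consists of non-zero non-isomorphisms inside a single copy $H[n]$, i.e. it is a proper cycle through the corresponding module in $\mathrm{mod}\,H$. It therefore suffices to exhibit an indecomposable $H$-module lying on no proper cycle in $\mathrm{mod}\,H$; I would take an indecomposable projective, which lies in the preprojective component and is a directing module in the classical sense. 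Viewed in degree $0$ this object is then a directing object of $\mathbf{D}^b(H)$, and since a triangle equivalence $\dba\simeq\mathbf{D}^b(H)$ preserves both shifts and Hom-spaces, hence proper paths, $\dba$ contains a directing object whenever $A$ is derived equivalent to the hereditary algebra $H$.

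For the converse I would argue through the Auslander--Reiten quiver of $\dba$. Starting from a directing object $X$, the first step is to analyse the connected component $\mathcal{C}$ containing $X$ and to show that $\mathcal{C}$ is directed and of the form $\mathbb{Z}Q$ for some quiver $Q$ without oriented cycles, the directing object then lying on a section of $\mathbb{Z}Q$; this uses the classification of the possible shapes of such translation quivers. The second step is to extract such a section $\Sigma$ and to prove that the direct sum of the objects on $\Sigma$ is a tilting object of $\dba$: the directedness of $\mathcal{C}$ together with the Auslander--Reiten formula yields the vanishing of self-extensions, and the connectedness of $A$ is used to show that $\Sigma$ generates $\dba$ as a triangulated category. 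Its endomorphism algebra is then the hereditary algebra $kQ$, so $\dba\simeq\mathbf{D}^b(kQ)$ and $A$ is derived equivalent to a hereditary algebra.

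The main obstacle is the converse, and within it precisely the two structural claims above: that the component of a directing object is a standard translation quiver of shape $\mathbb{Z}Q$, and that a section of it is a tilting object generating all of $\dba$. The second claim is where the hypothesis does its real work, since it is exactly the failure of generation that occurs for the remaining piecewise hereditary algebras, those derived equivalent to a canonical algebra (equivalently to $\mathrm{coh}\,\mathbb{X}$ for a weighted projective line $\mathbb{X}$), whose derived category contains no directing object because every indecomposable lies on a proper cycle. An alternative organisation, available once one knows $A$ to be piecewise hereditary, is to invoke the Happel dichotomy and rule out the canonical case directly by this absence of directing objects; but establishing that a directing object forces $A$ to be piecewise hereditary is of comparable difficulty, so I expect the Auslander--Reiten component analysis to be unavoidable.
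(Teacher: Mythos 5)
First, a point of comparison: the paper does not prove this statement at all --- it is imported verbatim as \cite[Corollary~5.5]{CR}, so the only benchmark is Chen--Ringel's proof in that reference, not anything in this paper. (You were right, incidentally, to read the paper's definition of \emph{directing} as containing a typo: it should say there is \emph{no} proper path from $X$ to $X$ of positive length, which is the convention you adopt and the one consistent with how Proposition~\ref{deh} uses the notion.) Your forward direction is essentially complete and correct: the degree argument via $\mathrm{Hom}_{\mathbf{D}^b(H)}(Z[n],W[m])\simeq\mathrm{Ext}^{m-n}_H(Z,W)$ correctly confines any proper cycle to a single shifted copy of $\mathrm{mod}\,H$, and an indecomposable projective $P$ over a hereditary algebra is directing there (any nonzero map from an indecomposable into $P$ is injective with projective source, so a cycle through $P$ would force a strictly increasing chain of dimensions). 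Transporting along a triangle equivalence is harmless.

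The genuine gap is the converse, which in your write-up is a plan rather than a proof: the two structural claims you isolate --- that the Auslander--Reiten component of a directing object is of shape $\mathbb{Z}Q$, and that the direct sum over a section is a tilting complex generating $\dba$ --- are precisely the substance of the cited theorem, and you give no argument for either. Worse, the strategy cannot even begin as stated: by Happel, $\dba$ admits Auslander--Reiten triangles if and only if $A$ has finite global dimension, and at the point where you invoke the AR-quiver you know nothing about $\mathrm{gl.dim}\,A$; you must first extract a finiteness statement (e.g.\ piecewise heredity, equivalently finite strong global dimension as in Lemma 3.2 of this paper) from the existence of a directing object, and that extraction is itself nontrivial. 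Your alternative route through the Happel--Reiten dichotomy also contains an error of fact: it is \emph{not} true that the derived category of every canonical algebra lacks directing objects --- domestic canonical algebras (weighted projective lines of negative Euler characteristic in your genus convention, i.e.\ the domestic case) are derived equivalent to tame hereditary algebras, so their derived categories \emph{do} contain directing objects. The dichotomy argument only works in the form: rule out the tubular and wild canonical types, where every indecomposable lies on a proper cycle (a fact that again needs proof). So the proposal correctly identifies where the difficulty lives but does not close it; as a replacement for the citation to \cite[Corollary~5.5]{CR} it is incomplete.
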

\begin{prop}\label{deh}
	Let $K/k$ be a finite Galois extension and $A$ a connected $k$-algebra. If $A\otimes_kK$ is derived equivalent to a hereditary algebra, then so is $A$.
\end{prop}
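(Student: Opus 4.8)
The plan is to exhibit a directing object in $\dba$ and then conclude by Theorem \ref{ph-dp}, which applies because $A$ is connected. First I would record that $\dbak$ contains a directing object. Indeed, since $A\otimes_kK$ is derived equivalent to a hereditary algebra $H$, writing $H$ as a product of its connected blocks (each itself hereditary) and applying Theorem \ref{ph-dp} to one such block yields an object that is directing in $\mathbf{D}^{\mathrm{b}}(H\mbox{-{\rm mod}})$; as any triangle equivalence preserves indecomposables, shifts and nonzero non-isomorphisms, and hence proper paths, this transports to a directing object $M$ of $\dbak$, for which there is no proper path from $M$ to $M$ of positive length. Using Lemma \ref{fef}(2) I fix an indecomposable $X\in\dba$ relative to $M$, so that the whole task reduces to proving that $X$ is directing.

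Suppose towards a contradiction that there is a proper path $X=X_0,X_1,\dots,X_n=X$ of positive length in $\dba$. For each $i$ I pick, by Lemma \ref{fef}(2), an indecomposable $M_i\in\dbak$ relative to $X_i$, with $M_0=M_n=M$. Each step is then converted into $\dbak$: a non-isomorphism step $X_{i-1}\rightarrow X_i$ becomes, by Lemma \ref{bridge}, a nonzero non-isomorphism $M_{i-1}\rightarrow{}^{g_i}M_i$ for some $g_i\in G$; a shift step $X_i=X_{i-1}[1]$ is handled by noting that $F$ and $-\otimes_kK$ commute with $[1]$, so $M_{i-1}[1]$ is again relative to $X_i$ and, by the uniqueness up to Galois twist in Lemma \ref{fef}(3), $M_{i-1}[1]\simeq{}^{g_i}M_i$ for some $g_i\in G$, yielding a shift step.

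I then splice these local fragments into a single proper path. As each ${}^{g}(-)$ is an exact autoequivalence of $\dbak$ commuting with $[1]$ and preserving nonzero non-isomorphisms, pushing the accumulated twist forward at every step chains the fragments into one proper path from $M=M_0$ to ${}^{h}M_n={}^{h}M$ of positive length $n$, where $h\in G$ is the product of the $g_i$. This is only a near-cycle because of the residual twist $h$. To close it, I iterate: applying ${}^{h}(-),{}^{h^{2}}(-),\dots$ and concatenating gives proper paths from $M$ to ${}^{h^{m}}M$ for all $m\geq 1$, and since $G$ is finite, some power satisfies ${}^{h^{m}}M=M$, producing a proper path from $M$ to $M$ of positive length. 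This is impossible because $M$ is directing, so $X$ must be directing, and Theorem \ref{ph-dp} then shows that $A$ is derived equivalent to a hereditary algebra.

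I expect the Galois-twist bookkeeping of the third paragraph to be the main obstacle. Lemma \ref{bridge} pins down each individual morphism only up to a twist $g_i$, so the source $M_i$ of one converted morphism need not match the target ${}^{g_i}M_i$ of the previous one; the accumulated twist must be carried along the path by applying the appropriate ${}^{g}(-)$ at each stage, and consequently the chained path closes up only to the orbit representative ${}^{h}M$ rather than to $M$ itself. Turning this near-cycle into a genuine cycle at $M$ is exactly what forces the finiteness of $G$ to be used, and checking that every twisted fragment remains a legitimate proper path --- shift steps staying shift steps and non-isomorphisms staying non-isomorphisms under the autoequivalences --- is the technical core.
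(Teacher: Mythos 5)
Your proposal is correct and follows essentially the same route as the paper: obtain a directing object $M$ in $\dbak$ via Theorem \ref{ph-dp}, pass to a relative indecomposable $X$ by Lemma \ref{fef}, convert a hypothetical proper cycle at $X$ step by step using Lemma \ref{bridge}, carry the accumulated Galois twist to reach ${}^{h}M$, and close the cycle by finiteness of $G$. The only (harmless) deviations are that you transport the directing object across the derived equivalence from a block of the hereditary algebra, where the paper takes a connected component of $A\otimes_kK$ directly, and that you justify the shift steps via the uniqueness-up-to-twist in Lemma \ref{fef}(3), where the paper simply chooses $M_i=M_{i-1}[1]$.
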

\begin{proof}
	We take a connected component of $A\otimes_kK$. By Theorem \ref{ph-dp}, let $M$ be a directing object in $\dbak$. By Lemma \ref{fef} (2), there is an indecomposable object $X\in\dba$ such that $M$ and $X$ are relative. 
	
	We claim that $X$ is a directing object in $\dba$, so our statement holds by Theorem \ref{ph-dp}. If not, there is a proper path $X=X_0,X_1,\dots,X_n=X$ in $\dba$. By Lemma \ref{fef} (3), for each $i$, let $M_i$ be an indecomposable object in $\dbak$ which is relative with $X_i$. 
	
	For each non-zero non-isomorphism in $\mathrm{Hom}_{\dba}(X_{i-1},X_i)$, Lemma~\ref{bridge} implies that there is some $g_i\in G$ (the Galois group) such that there is a non-zero non-isomorphism in $$\mathrm{Hom}_{\dbak}(M,{^{g_i}N}).$$ Since $^g(-)$ is an isomorphism for each $g\in G$, there is also a non-zero non-isomorphism in $$\mathrm{Hom}_{\dbak}({^gM},{^{gg_i}N}).$$ If $X_i=X_{i-1}[1]$, we can assume that $M_i=M_{i-1}[1]={^e(M_{i-1}}[1])$, where $g_i=e$ is the unit of $G$. 
	
	So in $\dbak$, there is a proper path $$M=M_0,M'_1={^{g_1}M_1},M'_2={^{g_2g_1}M_2},\dots,M'_n={^hM},$$ where $h=g_ng_{n-1}\cdots g_1\in G$. Since $G$ is a finite group, there is a positive integer $t$ such that $h^t=e$. So from $M$ to $M$, there is a proper path $$M,M'_1,M'_2,\dots,M'_n,M'_{n+1}={^{g_1h}M}_0,\dots,M'_{2n}={^{h^2}M},\dots,M'_{tn}={^{h^t}M}=M,$$ which contradicts that $M$ is a directing object.
\end{proof}
   
\begin{thm}\label{dehufe}
	Let $K/k$ be a finite separable field extension and $A$ a $k$-algebra. Then $A$ is derived equivalent to a hereditary algebra if and only if so is $A\otimes_kK$.
\end{thm}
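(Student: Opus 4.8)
The plan is to reduce the general finite separable case to the finite Galois case already handled by Proposition~\ref{deh}, and then assemble both directions of the equivalence. The forward direction is essentially free: if $A$ is derived equivalent to a hereditary algebra $H$, then by Lemma~\ref{deot} the algebra $A\otimes_kK$ is derived equivalent to $H\otimes_kK$, and since $K/k$ is separable, $H\otimes_kK$ is again hereditary (a separable extension preserves global dimension, or one invokes Lemma~\ref{ph} together with the fact that hereditary means strong global dimension $\leq 1$, a condition preserved by the extension). The substance of the theorem is therefore the converse: assuming $A\otimes_kK$ is derived equivalent to a hereditary algebra, deduce the same for $A$.

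For the converse I would first reduce to the connected case. The derived category $\dba$ decomposes according to the connected components of $A$, and $A$ is derived equivalent to a hereditary algebra if and only if each connected block is; so it suffices to treat a connected $k$-algebra $A$. Next I would handle the Galois hypothesis. The extension $K/k$ is only assumed finite separable, whereas Proposition~\ref{deh} requires a finite Galois extension. The standard device is to pass to a Galois closure: let $L/k$ be a finite Galois extension containing $K$ (this exists precisely because $K/k$ is separable). Then I would apply the already-proved results to the two steps $k \subset K \subset L$.

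The key step is to run the implications around the tower $k\subseteq K\subseteq L$. Suppose $A\otimes_kK$ is derived equivalent to a hereditary algebra. Applying the forward direction (Lemma~\ref{deot} plus heredity-preservation) to the separable extension $L/K$ and the algebra $A\otimes_kK$, we find that $(A\otimes_kK)\otimes_KL \simeq A\otimes_kL$ is derived equivalent to a hereditary algebra. Now $L/k$ is finite Galois, so Proposition~\ref{deh} applies directly to $A\otimes_kL = A\otimes_kL$ and the connected $k$-algebra $A$, yielding that $A$ is derived equivalent to a hereditary algebra. This closes the converse, and combining it with the forward direction gives the full equivalence of the theorem.

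I expect the main obstacle to be bookkeeping rather than a deep difficulty: one must confirm that the forward direction genuinely preserves heredity over an arbitrary separable extension (so that $A\otimes_kL$ inherits the hereditary-derived-equivalence from $A\otimes_kK$), and that the connectedness reduction is compatible with the field extension. The identification $(A\otimes_kK)\otimes_KL \simeq A\otimes_kL$ is the associativity of tensor products and is routine, but it is the hinge that lets the Galois closure do its work. Everything else is a direct citation of Proposition~\ref{deh}, Lemma~\ref{deot}, and the preservation of the hereditary property under separable base change.
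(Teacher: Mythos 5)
Your proposal is correct and follows essentially the same route as the paper: the forward direction via Lemma~\ref{deot} plus preservation of heredity under separable base change, and the converse by passing to a Galois closure $L/k$ of $K/k$, applying Lemma~\ref{deot} to the step $L/K$ to get $A\otimes_kL$ derived equivalent to a hereditary algebra, and then invoking Proposition~\ref{deh} for the Galois extension $L/k$ (with the reduction to connected components handled just as the paper does parenthetically). Your added justification that heredity is preserved --- via Lemma~\ref{ph} and the characterization of hereditary algebras by strong global dimension at most one --- is a detail the paper leaves implicit, so nothing is missing.
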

\begin{proof}
	If $A$ is derived equivalent to a hereditary algebra $B$, then by Lemma \ref{deot}, $A$ is equivalent to $B\otimes_kK$, which is also a hereditary algebra.
	
	Conversely, there is a finite separable field extension $L/K$ such that $L/k$ is a Galois extension. If $A\otimes_kK$ is derived equivalent to a hereditary algebra $B$, then by Lemma \ref{deot}, $A\otimes_kL$ is derived equivalent to a hereditary algebra $B\otimes_KL$. Hence (each connected component of) $A$ is derived equivalent to a hereditary algebra by Proposition \ref{deh}.
\end{proof}

According to \cite{HR}, a connected piecewise hereditary $k$-algebra is derived equivalent to either a hereditary $k$-algebra or a canonical $k$-algebra. Hence Corollary \ref{phqtufe} and the above theorem imply the following.

\begin{cor}
	Let $K/k$ be a finite separable field extension and $A$ a $k$-algebra. Then $A$ is derived equivalent to a canonical algebra if and only if so is $A\otimes_kK$.
\end{cor}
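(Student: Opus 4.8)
The plan is to treat the two directions asymmetrically. The forward direction is essentially formal: if $A$ is derived equivalent to a canonical $k$-algebra $C$, then Lemma~\ref{deot} gives that $A\otimes_kK$ is derived equivalent to $C\otimes_kK$, and $C\otimes_kK$ is again a canonical algebra over $K$---it has the same weighted quiver and the same defining parameters $\lambda_i$, which remain pairwise distinct under the field embedding $k\hookrightarrow K$. So no structural input beyond Lemma~\ref{deot} is needed here.

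The converse is where the real work lies, and here I would use the Happel--Zacharia classification from \cite{HR} in its \emph{exclusive} form: a connected piecewise hereditary algebra is of exactly one of two types, hereditary type or canonical type, the latter meaning piecewise hereditary but not derived equivalent to any hereditary algebra. Under this reading, ``derived equivalent to a canonical algebra'' is the conjunction of ``piecewise hereditary'' and ``not derived equivalent to a hereditary algebra'', and both conjuncts are already controlled: the first by Corollary~\ref{phqtufe}, the second (via its negation) by Theorem~\ref{dehufe}. Thus if $A\otimes_kK$ is of canonical type, then it is piecewise hereditary and not of hereditary type, whence by Corollary~\ref{phqtufe} and Theorem~\ref{dehufe} the same holds for $A$, i.e.\ $A$ is of canonical type. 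Since the classification is stated for connected algebras, I would first reduce to connected $A$ and note that $A\otimes_kK$, though possibly disconnected, has its components permuted transitively by the Galois group of a Galois closure; as each twist functor $^g(-)$ is an autoequivalence, all components are pairwise derived equivalent and hence share a single type, so deciding one decides all.

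The main obstacle is justifying the exclusivity of the dichotomy, since the whole argument rests on identifying canonical type with ``piecewise hereditary and not hereditary type''. The delicate point is the domestic case: a canonical algebra of domestic type is derived equivalent to a tame hereditary algebra, so on a literal reading the two classes overlap and the clean conjunction fails. I would dispose of this either by fixing the exclusive convention for canonical type from the outset (counting domestic canonical algebras as hereditary type), or by checking directly that the overlap does no harm---if $A\otimes_kK$ is domestic canonical it is derived equivalent to a tame hereditary algebra, Theorem~\ref{dehufe} then forces $A$ to be derived equivalent to a hereditary algebra, which in the tame case is itself derived equivalent to a domestic canonical algebra, so the asserted equivalence persists. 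With this point settled, the corollary is an immediate combination of Corollary~\ref{phqtufe} and Theorem~\ref{dehufe}.
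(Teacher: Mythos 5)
Your main line of argument is the same as the paper's: the paper proves this corollary in a single sentence, by combining the Happel--Reiten dichotomy with Corollary \ref{phqtufe} and Theorem \ref{dehufe} --- precisely the reading ``canonical type $=$ piecewise hereditary and not hereditary type'' that you describe. In that sense your proposal matches the intended proof, and your forward direction (Lemma \ref{deot} plus base change of a canonical algebra) is, if anything, cleaner than what the formal combination alone yields. Moreover, you have put your finger on a genuine subtlety that the paper passes over in silence: the dichotomy is \emph{not} exclusive, since a domestic canonical algebra is derived equivalent to a tame hereditary algebra, so ``piecewise hereditary and not of hereditary type'' is strictly smaller than ``derived equivalent to a canonical algebra''. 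This worry is a legitimate criticism of the paper's one-line proof, not an artifact of your approach.

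However, neither of your two repairs closes the gap rigorously. Repair (a) is not available: the corollary asserts the existence of a canonical algebra derived equivalent to $A$, and adopting an ``exclusive convention'' changes the statement being proved rather than proving it. Repair (b) is the right idea, but it rests on two facts you assert without proof: (i) that the hereditary algebra derived equivalent to $A$ is tame (this needs descent of representation type along $K/k$, or an argument that a hereditary algebra whose scalar extension is derived equivalent to a canonical algebra is itself tame), and (ii) that a connected tame hereditary algebra over an \emph{arbitrary} field $k$ is derived equivalent to a canonical $k$-algebra. Fact (ii) is standard over algebraically closed fields, but over general $k$ --- the only setting where this paper has content --- it requires Ringel's bimodule canonical algebras and the Lenzing--Kussin theory of exceptional curves; the same caveat applies to your forward direction, where you justify ``$C\otimes_kK$ is canonical'' via a weighted quiver with pairwise distinct parameters $\lambda_i$, a description of canonical algebras valid only over algebraically closed fields. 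Finally, your connectedness reduction contains a false step: for non-Galois $K/k$ the blocks of $A\otimes_kK$ need not be pairwise derived equivalent (take $k=\mathbb{Q}$ and $A=K=\mathbb{Q}(\sqrt[3]{2})$; then $A\otimes_kK$ has blocks $K$ and $K(\omega)$, which are not derived equivalent); the Galois group permutes transitively the blocks of $A\otimes_kL$ for a Galois closure $L$, not those of $A\otimes_kK$.
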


Recall that a $k$-algebra $A$ is called \textit{tilted} if there is a hereditary algebra $B$ and a tilting $B$-module $T$ such that $A\simeq\mathrm{End}_B(T)^{\mathrm{op}}$. The following result is the field extension version compared to the skew group algebra extension version proved in \cite{LZ}.

\begin{cor}
	Let $K/k$ be a finite separable field extension and $A$ a $k$-algebra. Then $A$ is tilted if and only if so is $A\otimes_kK$.
\end{cor}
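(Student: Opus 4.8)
The final statement to prove is the following corollary.

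\medskip
\noindent\textbf{Corollary.} \textit{Let $K/k$ be a finite separable field extension and $A$ a $k$-algebra. Then $A$ is tilted if and only if so is $A\otimes_kK$.}
\medskip

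The plan is to characterize the tilted property by a combination of two conditions that we already control under the field extension $-\otimes_kK$: namely, being derived equivalent to a hereditary algebra, and having global dimension bounded by $2$. Recall the standard fact that a $k$-algebra $B$ is tilted if and only if $B$ is derived equivalent to a hereditary algebra and $\mathrm{gl}\dim B\leq 2$; indeed, a tilted algebra arises as $\mathrm{End}_H(T)^{\mathrm{op}}$ for a tilting module $T$ over a hereditary algebra $H$, so it is derived equivalent to $H$ and satisfies $\mathrm{gl}\dim B\leq 2$, and conversely a quasi-tilted algebra (equivalently $\mathrm{s.gl}\dim \leq 2$ by the lemma of Happel--Zacharia) that is moreover derived equivalent to a hereditary algebra is tilted rather than canonical. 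So the first step is to record this characterization precisely, citing the relevant results of Happel--Reiten--Smal\o{} and Happel--Zacharia.

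Granting the characterization, the proof assembles from pieces already in the paper. For the derived-equivalence-to-hereditary condition, I would invoke Theorem \ref{dehufe} directly: $A$ is derived equivalent to a hereditary algebra if and only if $A\otimes_kK$ is. For the global dimension condition, the natural route is to show that $\mathrm{gl}\dim A=\mathrm{gl}\dim A\otimes_kK$, or at least that one is finite and bounded by $2$ exactly when the other is. This should follow from the base-change behaviour of projective resolutions: the functor $-\otimes_kK$ is exact and sends projectives to projectives, so a projective resolution of a module $M$ of length $n$ yields a projective resolution of $M\otimes_kK$ of length $n$, giving $\mathrm{gl}\dim A\otimes_kK\leq \mathrm{gl}\dim A$. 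For the reverse inequality I would use separability: every $A$-module $X$ is a direct summand of $F(X\otimes_kK)$, and $F$ preserves projective dimension (being exact and sending projectives to projectives), so $\mathrm{pd}_A X\leq \mathrm{pd}_{A\otimes_kK}(X\otimes_kK)$, whence $\mathrm{gl}\dim A\leq \mathrm{gl}\dim A\otimes_kK$. Together these give the equality of global dimensions.

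Combining the two inputs finishes the argument: $A$ is tilted if and only if $A$ is derived equivalent to hereditary and $\mathrm{gl}\dim A\leq 2$, which by Theorem \ref{dehufe} and the global dimension equality holds if and only if $A\otimes_kK$ is derived equivalent to hereditary and $\mathrm{gl}\dim A\otimes_kK\leq 2$, that is, if and only if $A\otimes_kK$ is tilted.

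I expect the main obstacle to be the reverse global dimension inequality $\mathrm{gl}\dim A\leq \mathrm{gl}\dim A\otimes_kK$, since it relies essentially on separability of the extension and on the claim that the forgetful functor $F$ does not increase projective dimension; one must verify carefully that a direct summand of a module with projective dimension $\leq n$ again has projective dimension $\leq n$, and that $F(X\otimes_kK)$ has projective dimension equal to that of $X\otimes_kK$ over $A\otimes_kK$. A secondary subtlety is whether the tilted characterization is cleanest stated via $\mathrm{gl}\dim$ or via $\mathrm{s.gl}\dim$; since Lemma \ref{ph} already gives $\mathrm{s.gl}\dim A=\mathrm{s.gl}\dim A\otimes_kK$, one could alternatively phrase the whole argument in terms of strong global dimension and bypass a separate global dimension computation, which may be the more economical route and is worth adopting if the tilted characterization is available in that form.
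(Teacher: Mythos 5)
Your primary route rests on a characterization that is false: it is not true that a $k$-algebra $B$ is tilted if and only if $B$ is derived equivalent to a hereditary algebra and $\mathrm{gl}\dim B\leq 2$. The slip is visible in your own parenthetical ``a quasi-tilted algebra (equivalently $\mathrm{s.gl}\dim\leq 2$)'': quasi-tiltedness is equivalent to $\mathrm{s.gl}\dim\leq 2$, \emph{not} to $\mathrm{gl}\dim\leq 2$, and the latter is strictly weaker, since quasi-tiltedness also requires every indecomposable module to have projective dimension $\leq 1$ or injective dimension $\leq 1$. Concretely, let $\Lambda=kQ/I$, where $Q$ is the linearly oriented quiver $1\xrightarrow{\alpha}2\xrightarrow{\beta}3\xrightarrow{\gamma}4\xrightarrow{\delta}5$ and $I=(\beta\alpha,\,\delta\gamma)$. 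This is a gentle tree algebra, hence iterated tilted of type $A_5$ by the Assem--Happel classification, so it is derived equivalent to the hereditary algebra $kA_5$, and one checks $\mathrm{gl}\dim\Lambda=2$. Yet the simple $S_3$ at the middle vertex has projective dimension $2$ and injective dimension $2$ (via $0\to P_5\to P_4\to P_3\to S_3\to 0$ and $0\to S_3\to I_3\to I_2\to I_1\to 0$), so $\Lambda$ is not quasi-tilted and in particular not tilted. Thus the biconditional you reduce to fails, and your global dimension comparison $\mathrm{gl}\dim A=\mathrm{gl}\dim A\otimes_kK$ (correct in itself) cannot carry the proof.

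The repair is exactly the alternative you relegate to your final sentence: state the characterization with $\mathrm{s.gl}\dim\leq 2$, i.e.\ ``$B$ is tilted if and only if $B$ is quasi-tilted and derived equivalent to a hereditary algebra.'' With that version your assembly goes through and becomes essentially the paper's proof of the ``if'' direction: the paper transfers quasi-tiltedness by Corollary \ref{phqtufe}(2) and the derived-equivalence-to-hereditary condition by Theorem \ref{dehufe}, then concludes that $A$ is tilted. For the ``only if'' direction the paper does not need any characterization at all: if $\mathrm{End}_B(T)^{\mathrm{op}}\simeq A$ for a tilting module $T$ over a hereditary algebra $B$, then $T\otimes_kK$ is a tilting $B\otimes_kK$-module with endomorphism algebra $A\otimes_kK$, and $B\otimes_kK$ is still hereditary. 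Finally, be aware that even the corrected ``if'' step is not formal: a quasi-tilted algebra of canonical type can be derived equivalent to a hereditary algebra (domestic weighted projective lines), and one needs the classification result that such algebras are in fact tilted; the paper's phrase ``hence $H$ is a module category'' glosses over the same point, so if you adopt this route you should cite the relevant results of Happel--Reiten and Skowro\'nski rather than treat it as immediate.
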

\begin{proof}
	Assume that $A$ is tilted. Let $T$ be a tilting module of a hereditary $k$-algebra $B$ such that $\mathrm{End}_B(T)^{\mathrm{op}}\simeq A$. Then $T\otimes_kK$ is a tilting $B\otimes_kK$-module and $\mathrm{End}_{B\otimes_kK}(T\otimes_kK)^{\mathrm{op}}\simeq A\otimes_kK$. Since $B\otimes_kK$ is also hereditary, $A\otimes_kK$ is a tilted algebra.
	
	Conversely, assume that $A\otimes_kK$ is tilted, so it is quasi-tilted and derived equivalent to a hereditary algebra. By Corollary \ref{phqtufe}, $A$ is quasi-tilted and derived equivalent to a hereditary abelian category $H$. By Theorem \ref{dehufe}, $A$ is derived equivalent to a hereditary algebra. Hence $H$ is a module category of a hereditary algebra and $A$ is tilted.
\end{proof}

\vskip 10pt

\noindent {\bf Acknowledgments.}\quad The author is grateful to Professor Xiao-Wu Chen for his encouragement and advices. He is also grateful to the reviewers for the helpful comments.

\bibliography{}

\vskip 10pt

{\footnotesize \noindent Jie Li\\
	School of Mathematical Sciences, University of Science and Technology of China, Hefei 230026, Anhui, PR China}

\end{document}